\newtheorem{thm}{Theorem}[section]
\newtheorem{baseexample}[thm]{Example} 
\newtheorem{baseremark}[thm]{Remark} 
\newcommand{\rem}[1]{}
\newcommand{\Step}[1]{\noindent {\bf Step #1.}} 
\newcommand{\F}{\mathbb{F}}
\newcommand{\Q}{\mathbb{Q}}
\newcommand{\frakSmall}{
\newcommand{\fraka}{{\mathfrak{a}}}
\newcommand{\frakb}{{\mathfrak{b}}}
\newcommand{\frakc}{{\mathfrak{c}}}
\newcommand{\frakd}{{\mathfrak{d}}}
\newcommand{\frake}{{\mathfrak{e}}}
\newcommand{\frakf}{{\mathfrak{f}}}
\newcommand{\frakg}{{\mathfrak{g}}}
\newcommand{\frakh}{{\mathfrak{h}}}
\newcommand{\fraki}{{\mathfrak{i}}}
\newcommand{\frakj}{{\mathfrak{j}}}
\newcommand{\frakk}{{\mathfrak{k}}}
\newcommand{\frakl}{{\mathfrak{l}}}
\newcommand{\frakm}{{\mathfrak{m}}}
\newcommand{\frakn}{{\mathfrak{n}}}
\newcommand{\frako}{{\mathfrak{o}}}
\newcommand{\frakp}{{\mathfrak{p}}}
\newcommand{\frakq}{{\mathfrak{q}}}
\newcommand{\frakr}{{\mathfrak{r}}}
\newcommand{\fraks}{{\mathfrak{s}}}
\newcommand{\frakt}{{\mathfrak{t}}}
\newcommand{\fraku}{{\mathfrak{u}}}
\newcommand{\frakv}{{\mathfrak{v}}}
\newcommand{\frakw}{{\mathfrak{w}}}
\newcommand{\frakx}{{\mathfrak{x}}}
\newcommand{\fraky}{{\mathfrak{y}}}
\newcommand{\frakz}{{\mathfrak{z}}}
}
\newcommand{\calCapital}{
\newcommand{\calA}{{\mathcal{A}}}
\newcommand{\calB}{{\mathcal{B}}}
\newcommand{\calC}{{\mathcal{C}}}
\newcommand{\calD}{{\mathcal{D}}}
\newcommand{\calE}{{\mathcal{E}}}
\newcommand{\calF}{{\mathcal{F}}}
\newcommand{\calG}{{\mathcal{G}}}
\newcommand{\calH}{{\mathcal{H}}}
\newcommand{\calI}{{\mathcal{I}}}
\newcommand{\calJ}{{\mathcal{J}}}
\newcommand{\calK}{{\mathcal{K}}}
\newcommand{\calL}{{\mathcal{L}}}
\newcommand{\calM}{{\mathcal{M}}}
\newcommand{\calN}{{\mathcal{N}}}
\newcommand{\calO}{{\mathcal{O}}}
\newcommand{\calP}{{\mathcal{P}}}
\newcommand{\calQ}{{\mathcal{Q}}}
\newcommand{\calR}{{\mathcal{R}}}
\newcommand{\calS}{{\mathcal{S}}}
\newcommand{\calT}{{\mathcal{T}}}
\newcommand{\calU}{{\mathcal{U}}}
\newcommand{\calV}{{\mathcal{V}}}
\newcommand{\calW}{{\mathcal{W}}}
\newcommand{\calX}{{\mathcal{X}}}
\newcommand{\calY}{{\mathcal{Y}}}
\newcommand{\calZ}{{\mathcal{Z}}}
}
\newcommand{\bbCapital}{
\newcommand{\bbA}{{\mathbb{A}}}
\newcommand{\bbB}{{\mathbb{B}}}
\newcommand{\bbC}{{\mathbb{C}}}
\newcommand{\bbD}{{\mathbb{D}}}
\newcommand{\bbE}{{\mathbb{E}}}
\newcommand{\bbF}{{\mathbb{F}}}
\newcommand{\bbG}{{\mathbb{G}}}
\newcommand{\bbH}{{\mathbb{H}}}
\newcommand{\bbI}{{\mathbb{I}}}
\newcommand{\bbJ}{{\mathbb{J}}}
\newcommand{\bbK}{{\mathbb{K}}}
\newcommand{\bbL}{{\mathbb{L}}}
\newcommand{\bbM}{{\mathbb{M}}}
\newcommand{\bbN}{{\mathbb{N}}}
\newcommand{\bbO}{{\mathbb{O}}}
\newcommand{\bbP}{{\mathbb{P}}}
\newcommand{\bbQ}{{\mathbb{Q}}}
\newcommand{\bbR}{{\mathbb{R}}}
\newcommand{\bbS}{{\mathbb{S}}}
\newcommand{\bbT}{{\mathbb{T}}}
\newcommand{\bbU}{{\mathbb{U}}}
\newcommand{\bbV}{{\mathbb{V}}}
\newcommand{\bbW}{{\mathbb{W}}}
\newcommand{\bbX}{{\mathbb{X}}}
\newcommand{\bbY}{{\mathbb{Y}}}
\newcommand{\bbZ}{{\mathbb{Z}}}
}
\newcommand{\veps}{\varepsilon}
\newcommand{\onto}{\twoheadrightarrow}
\newcommand{\suchthat}{\,:\,}
\newcommand{\SMatII}[4]{\left[\begin{array}{cc} {#1} & {#2} \\ {#3} &
{#4} \end{array}\right]}
\newcommand{\smallSMatII}[4]{\left[\begin{smallmatrix} {#1} & {#2} \\ {#3} &
{#4} \end{smallmatrix}\right]}
\newcommand{\SMatIII}[9]{\left[\begin{array}{ccc} {#1} & {#2} & {#3} \\ {#4} &
{#5} & {#6} \\ {#7} & {#8} & {#9} \end{array}\right]}
\DeclareMathOperator{\Gal}{Gal} %
\DeclareMathOperator{\Hom}{Hom} %
\DeclareMathOperator{\im}{im} %
\DeclareMathOperator{\Jac}{Jac} %
\newcommand{\nGL}[2]{\mathrm{GL}_{#2}({#1})}
\newcommand{\nMat}[2]{\mathrm{M}_{#2}(#1)}
\newcommand{\units}[1]{{#1^\times}}
\newtheorem{theorem}{Theorem}
\newtheorem{proposition}[theorem]{Proposition}
\newtheorem{mybaseremark}[theorem]{Remark}
\newenvironment{myremark}
{\begin{mybaseremark}\rm}{\end{mybaseremark}}
\title[Rationally Isometric Quadratic Forms]{An Elementary Proof That Rationally Isometric Quadratic
Forms Are Isometric}
\author{Uriya A.\ First}
\date{\today}
\address{Einstein Institute of Mathematics, Hebrew University of Jerusalem}
\email{uriya.first@gmail.com}
\thanks{This research was supported by a Swiss National Foundation of Science Grant no.\ IZK0Z2\_151061}
\keywords{quadratic form, hermitian form, valuation, rational isomorphism, Grothendieck-Serre conjecture}
\subjclass[2010]{11E08}
\begin{document}

\maketitle

\begin{abstract}
    Let $R$ be a valuation ring with fraction field $K$ and $2\in R^\times$.
    We give an elementary proof of the following known result: Two  unimodular quadratic forms over $R$ are isometric over $K$ if and only
    if they are isometric over $R$. Our proof does not use cancelation of quadratic forms and yields an
    explicit algorithm to construct an isometry over $R$ from a given isometry over $K$.
    The statement actually holds for hermitian forms over valuated involutary division rings, provided mild assumptions.
\end{abstract}

\bigskip

Let $S/R$ be a separable commutative ring extension with $R$ a local integral domain, let $F$ be the fraction
field of $R$, and let $A$ be an Azumaya
$S$-algebra admitting an involution $\sigma$ with $R=S^\sigma:=\{s\in S\suchthat s^\sigma=s\}$.
Assume that $2\in\units{R}$.
It was shown in \cite{OjanPanin01} that when $R$ is regular and contains a field,
a unimodular hermitian form over $A$ is hyperbolic over $A\otimes_RF$
if and only if it is hyperbolic over $A$. This is a special case of the Grothendieck-Serre conjecture;
see for instance \cite[\S1]{Panin05} and related papers.
The same statement was established in \cite[Pr.\ 3.6]{Beke13} when $R$ is a valuation ring (or
even an intersection of finitely many) and $A$ has no zero-divisors. By invoking a cancelation theorem of Keller
(\cite[Th.\ 3.4.2]{Keller88}) as done in \cite[Pr.\ 2.14]{BekeVGeel14},
this implies that when $R$ is a valuation ring or local regular ring containing a field,
two unimodular
hermitian forms over $(A,\sigma)$ are isometric over $A\otimes_R F$ if and only if
they are isometric over $R$. Special cases of these results were proved much earlier by various authors
(e.g.\ see \cite[\S6.2]{SchQuadraticAndHermitianForms} and similar references).

\medskip

We give  an elementary proof of the following special case:

\begin{theorem}\label{TH}
    Let $K/F$ be a field extension admitting an involution $\sigma\in\Gal(K/F)$
    such that $K^\sigma=F$, and let $\nu:\units{K}\onto \Gamma$ be an (additive) valuation with $\nu\circ\sigma=\nu$
    and $\nu(2)=0$.
    Denote by $S$ and $R$ the valuation rings of $\nu$ in $K$ and $F$, respectively.
    If $K/F$ is unramified with respect to $\nu$ (i.e.\ $\im(\nu|_F)=\Gamma$), then
    two unimodular $1$-hermitian forms over $(S,\sigma)$ are isometric over $K\cong S\otimes_R F$ if
    and only if they are isometric over $S$.
\end{theorem}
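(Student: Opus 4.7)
The plan is to induct on the common rank $n$. The base $n=0$ is vacuous, and for $n=1$ the hypothesis $\langle a\rangle\cong \langle b\rangle$ over $K$ means $b=c^\sig c\,a$ for some $c\in \units K$; since $\nu(c^\sig c)=2\nu(c)=0$ and $\Gamma$ is torsion-free, $\nu(c)=0$, so $c\in \units S$ and the isometry already works over $S$.

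For the inductive step, fix unimodular forms $h_1,h_2$ of rank $n\geq 2$ and an isometry $\phi\in\GL_n(K)$ between their $K$-extensions. Because $2\in\units S$, the residue form $\bar h_1$ is a non-degenerate hermitian form over the residue field of $S$, which by polarization represents a unit; lifting produces $v\in S^n$ with $a:=h_1(v,v)\in\units R$, and hence an $S$-orthogonal decomposition $h_1\cong\langle a\rangle\perp h_1'$ with $h_1'$ unimodular of rank $n-1$. The image $\phi(v)\in K^n$ witnesses that $h_2$ represents $a$ over $K$. The crux of the argument is the \emph{representation lemma}: any unimodular hermitian form over $(S,\sig)$ that represents $a\in\units R$ over $K$ also represents $a$ over $S$. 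Granting this, pick $w\in S^n$ with $h_2(w,w)=a$, yielding $h_2\cong\langle a\rangle\perp h_2'$ over $S$; Witt's \emph{extension} theorem over the field $K$ (not cancellation) extends the elementary isometry $v\mapsto w$ of $\langle a\rangle$ to an isometry $(K^n,h_1)\cong(K^n,h_2)$, whose restriction to orthogonal complements is a $K$-isometry $h_1'\cong h_2'$ of unimodular rank-$(n-1)$ forms, to which the induction hypothesis applies.

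To prove the representation lemma, take $y\in K^n$ with $h(y,y)=a$; if $y\in S^n$ we are done, otherwise select $c\in\units F$ with $\nu(c)=\min_i\nu(y_i)<0$, using the unramified hypothesis $\im(\nu|_F)=\Gamma$ crucially here to keep $c$ in $F$. Setting $y':=y/c\in S^n$ gives a primitive $y'$ with $h(y',y')=a/c^2\in\frakm_R$. Unimodularity of $h$ produces $w_0\in S^n$ with $h(y',w_0)=1$, so the rank-$2$ sublattice $P:=Sy'+Sw_0$ is $h$-unimodular with Gram matrix $\smallSMatII{\alpha}{1}{1}{\beta}$, $\alpha\in\frakm_R$. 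Splitting $h\cong P\perp P^\perp$ over $S$ reduces the task to showing $P$ represents $a$ over $S$; and $P$ already represents $a$ over $K$ since $y=cy'\in P_K$. The ansatz $xy'+w_0$ with $x\in R$ turns this into the problem of solving the quadratic $\alpha x^2+2x+(\beta-a)=0$ in $R$, whose discriminant lies in $4(1+\frakm_R)$. Using $2\in\units R$, $\alpha\in\frakm_R$, and a careful valuation-theoretic argument exploiting the known $K$-rational solution, one then exhibits an $R$-rational root explicitly.

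The main obstacle will be this final step: extracting an $R$-rational root of the quadratic without invoking Hensel's lemma, which is unavailable in a general (non-henselian) valuation ring. The promised explicit algorithm must produce the root by a finite, direct procedure --- essentially replacing the usual Hensel lifting with a valuation-guided iteration that decreases some well-chosen integer invariant at each step, and whose terminating output, when assembled with the inductive decomposition, yields the $S$-isometry with no appeal to cancellation.
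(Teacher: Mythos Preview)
Your overall strategy---induction on rank via a representation lemma, splitting off a common $\langle a\rangle$ from both sides---is a genuinely different route from the paper's (which instead diagonalizes the \emph{isometry} via $\GL_n(K)=\GL_n(S)\cdot T\cdot\GL_n(S)$, reduces it to the shape $\pi I_r\oplus\pi^{-1}I_r$, and then writes down an explicit $2r\times 2r$ congruence). However, there is a concrete gap at the end. Your ansatz $xy'+w_0$ pins the second coordinate to $1$, and the resulting one-variable quadratic $\alpha x^{2}+2x+(\beta-a)=0$ need not have a root in $R$, nor even in $F$: take $R=\Z_{(5)}$, $a=1$, $c=1/5$, $\alpha=25$, $\beta=0$; the equation becomes $25x^{2}+2x-1=0$ with discriminant $4\cdot 26$, and $26\notin\Q^{\times 2}$. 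No ``valuation-guided iteration'' can help, since there is no $F$-root to iterate from; the only known $K$-point of $q(X,Z)=a$ is $(c,0)$, which lies on the slice $Z=0$, not $Z=1$.

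The repair is to let both coordinates move. Intersecting the conic $\alpha X^{2}+2XZ+\beta Z^{2}=a$ with the line through $(c,0)$ of slope $1/c$ yields the second point
\[
(X,Z)=\Bigl(\tfrac{c(\beta-a)}{\,a+\beta+2c\,},\ \tfrac{-2(a+c)}{\,a+\beta+2c\,}\Bigr),
\]
and since $a,\beta\in R$, $c\in\units F$ with $\nu(c)<0$, and $\nu(2)=0$, the denominator has valuation $\nu(c)$ while each numerator has valuation at least $\nu(c)$; hence $(X,Z)\in R^{2}\subseteq S^{2}$, and one checks $q(X,Z)=a$ directly. This closed-form solution completes your representation lemma with no iteration and no Hensel-type input, and then your induction goes through. (A side remark: your appeal to Witt over $K$ \emph{is} cancellation, just over a field; what the paper avoids is cancellation over the local ring $S$, which is the deeper statement.)
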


Our proof is elementary and avoids cancelation of hermitian forms over local rings.
Furthermore,
it yields an explicit  algorithm to construct an isometry over $S$ from an isometry over $K$.
(The basic operations required for the algorithm are arithmetic operations in $K$ and $\Gamma$, applying $\nu$,
and producing elements of $F$ with a given valuation.)

\medskip

Before giving the proof, let us recall some of the definitions; see \cite{SchQuadraticAndHermitianForms}
for an extensive discussion: Let $\veps=\pm 1$. An \emph{$\veps$-hermitian space} over a ring with
involution $(S,\sigma)$ consists of a pair $(M,h)$ where  $M$ is a projective left $S$-module, and
$h:M\times M\to S$ is a biadditive map satisfying $h(ax,by)=a h(x,y)b^\sigma$ and $h(x,y)=\veps h(y,x)^\sigma$ for all $x,y\in M$ and
$a,b\in M$. In this case, $h$ is called an \emph{$\veps$-hermitian form}. We say that $h$ is \emph{unimodular}
if the map $\tilde{h}:M\to \Hom_S(M,S)$ given by $\tilde{h}(x)= [y\mapsto h(y,x)]$ is an isomorphism.
Two hermitian spaces $(M,h)$ and $(M',h')$ are called \emph{isometric} if there is an isomorphism
$u:M\to M'$ such that $h'(ux,uy)=h(x,y)$.  If $(K,\tau)$ is a ring with involution containing $(S,\sigma)$,
then $(M,h)$ gives rise to an $\veps$-hermitian space $(M_K,h_K)$ over $(K,\tau)$
defined by $M_K=K\otimes_S M$ and $h_K(a\otimes x,b\otimes y)=a h(x,y) b^\tau$ ($x,y\in M$, $a,b\in K$).
We say that $(M,h)$ and $(M',h')$ are isometric over $(K,\tau)$ if
$(M_K, h_K)$ and $(M'_K,h'_K)$ are isometric.

\begin{proof}[Proof of Thoerem~\ref{TH}]

For a matrix $a=(a_{ij})_{i,j}$ over $K$, write $a^*:=(a_{ji}^\sigma)_{i,j}$.
We say that $a$ is $*$-symmetric if $a^*=a$.

\medskip

\Step{0}
Assume $(M,h)$ and $(M',h')$ are unimodular $1$-hermitian spaces over $S$ that are isometric
over $K$. Then $\dim_S M=\dim_S M'$, hence we may assume $M=M'=S^n$ (viewed as row vectors).
In this case, there are unique $*$-symmetric matrices $a,b\in\nGL{S}{n}$
such that $h(u,v)=uav^*$ and $h'(u,v)=ubv^*$ for all $u,v\in S^n$.
It is well-known that $h\cong h'$ over $K$ (resp.\ $S$) if and only if
there exists $u\in\nGL{K}{n}$ (resp.\
$u\in\nGL{S}{n}$) such that $uau^*=b$.
It is thus enough to show that if $uau^*=b$ with $u\in\nGL{K}{n}$, then
$u$ can be taken to be in $\nGL{S}{n}$.

\medskip

\Step{1} We may assume $u$ is diagonal. Indeed, it is a standard claim that in every B\'{e}zout domain (i.e.\ a domain
whose f.g.\ ideals are principle), $S$ in particular,
\begin{equation}\label{EQ:one}
\nGL{K}{n}=\nGL{S}{n}\cdot T\cdot \nGL{S}{n},
\end{equation}
where $T$ denotes the  diagonal matrices in $\nGL{K}{n}$.
For the sake of completeness, we shall recall the proof for valuation rings in Proposition~\ref{PR}.
Now, using \eqref{EQ:one}, write $u=xu'y$ with $x,y\in \nGL{S}{n}$ and $u'\in T$.
We may replace
$a,b,u$ with $yay^*,x^{-1}b(x^{-1})^*,u'$.

\medskip

\Step{2} Let $I_r$ denote the identity matrix
of size $r$. We claim that we may assume $u$ is of the form $\pi I_r\oplus \pi^{-1} I_r\oplus I_{n-2r}$
for some $\pi\in \Jac(R)$ and $r>0$.

For a matrix $x=(x_{ij})_{i,j}$, write $\nu(x)=(\nu(x_{ij}))_{i,j}$. Then $\nu(x)$
is a matrix with entries in $\Gamma\cup\{\infty\}$ (where $\nu(0)=\infty$).
Let $u_1,\dots,u_n$ be the diagonal entries of $u$ and
let $\gamma_t>\gamma_{t-1}>\dots > \gamma_1>\gamma_0=0$
be the  absolute values of the valuations of $u_1,\dots,u_n$ together with $0\in \Gamma$.
Conjugating $a,b,u$ by a suitable permutation matrix, we may
assume that
\[\nu(u_1)=\dots=\nu(u_r)=\gamma_t, \qquad \nu(u_{r+1})=\dots=\nu(u_{r+s})=-\gamma_t,\]
and $|\nu(u_i)|<\gamma_t$ for all $i>r+s$.

We  claim that $r=s$. Indeed,
write $(\alpha_{ij})=\nu(a)$, $(\beta_{ij})=\nu(b)$ and $\tau_i=\nu(u_i)=\nu(u_i^\sigma)$.
Then
\[
(\beta_{ij})_{i,j}=\nu(uau^*)=(\alpha_{ij}+\tau_i+\tau_j)_{i,j},
\]
hence $\alpha_{ij}+\tau_i+\tau_j\geq 0$ for all $i,j$.
Since $\tau_i+\tau_j<0$ when $r<i\leq r+s$ and $r<j$, we have
\[\nu(a_{ij})>0\qquad\forall\quad r<i\leq r+s,\quad r<j\ .\]
If $s>r$,  this implies that rows $r+1,\dots,r+s$ of $a$ are linearly dependant over
$S/\Jac(S)$, contrary to our assumption that $a$ is invertible over $S$.
Thus, $s\leq r$.
Applying a similar argument to show that $\nu(\beta_{ij})>0$ when $i\leq r$ and $j\leq r$ or $r+s<j$ yields that
$s\geq r$, so $s=r$.

We now apply induction to $t$, the case $t=0$ being clear.
Pick an element $\pi\in R$ of valuation $\gamma_t-\gamma_{t-1}$
(here we use the assumption that $K/F$ is unramified),
let $u'=\pi I_r\oplus \pi^{-1} I_r\oplus I_{n-2r}$,
and write $b'=u'au'^*$ and $(\beta'_{ij})=\nu(u'au'^*)$.
We claim that $\beta'_{ij}\geq 0$ for all $i$. As $b'$ is $*$-symmetric,
we only need to verify this for $i\leq j$. Indeed, we have
\[
\beta'_{ij}=\left\{\begin{array}{ll}
\alpha_{ij}+2\gamma_t-2\gamma_{t-1} & i\leq j\leq r\\
\alpha_{ij}-2\gamma_t+2\gamma_{t-1} & r<i\leq j\leq 2r\\
\alpha_{ij} & \text{$i\leq r<j\leq 2r$ or $2r<i\leq j$}\\
\alpha_{ij}+\gamma_t-\gamma_{t-1} & i\leq r<2r<j\\
\alpha_{ij}-\gamma_t+\gamma_{t-1} & r<i\leq 2r<j
\end{array}\right.
\]
All the cases are clear except the second and the fifth.
Recall that $\alpha_{ij}+\tau_i+\tau_j=\beta_{ij}$.
Now,
in
second case, we have
\[\alpha_{ij}-2\gamma_t+2\gamma_{t-1}=\alpha_{ij}+\tau_i+\tau_j+2\gamma_{t-1}=\beta_{ij}+2\gamma_{t-1}\geq 0,\]
and in the fifth case, writing $|\tau_j|=\gamma_s$ for $s<t$ (and noting that $\tau_i=-\gamma_t$), we get
\[\alpha_{ij}-\gamma_t+\gamma_{t-1}=\beta_{ij}-\tau_j+\gamma_{t-1}\geq \beta_{ij}+\gamma_{t-1}-\gamma_s\geq 0,\]
as required.
Thus, $b'\in\nMat{S}{n}$. Since $\nu(\det(b'))=\nu(\det(u'au'^*))=\nu(\det(a))=0$ (because $\det(u')=1$),
we have $b'\in\nGL{S}{n}$.
Therefore, we may replace $a,b,u$ with $a,b',u'$
and apply induction to $b',b,uu'^{-1}$ (the parameter $t$ is decreased by $1$).

\medskip

\Step{3} We may assume $n=2r$ and $u=\pi I_r\oplus \pi^{-1}I_r$ with $\pi\in\Jac(R)$.
Indeed,
suppose $u=\pi I_r\oplus \pi^{-1}I_r\oplus I_{n-2r}$ as in step 2. Then we may write
\[
a=\SMatIII{a_{11}}{a_{12}}{a_{13}}{a_{12}^*}{\pi^2 a_{22}}{\pi a_{23}}{a_{13}^*}{a_{23}^*\pi}{a_{33}},\qquad
b=\SMatIII{\pi^2 a_{11}}{a_{12}}{\pi a_{13}}{a_{12}^*}{a_{22}}{a_{23}}{a_{13}^*\pi}{a_{23}^*}{a_{33}}
\]
where $a_{11},a_{12},a_{13},a_{22},a_{23},a_{33}$ are  matrices over $S$ and $a_{11},a_{12},a_{22}$
are of size $r\times r$.
Observe that the image of $a$ in $\nMat{S/\Jac(S)}{n}$
has the form
\[
\SMatIII{*}{*}{*}{*}{ 0}{ 0}{*}{0}{*}\ .
\]
This implies that $a_{33}$ is invertible over $S$.
Let
\[
v=\SMatIII{1}{0}{-a_{13}a_{33}^{-1}}{0}{1}{-\pi a_{23}a_{33}^{-1}}{0}{0}{1},\qquad w=\SMatIII{1}{0}{-\pi a_{13}a_{33}^{-1}}{0}{1}{-a_{23}a_{33}^{-1}}{0}{0}{1}
\]
Then $v,w\in\nGL{S}{n}$ and it is easy to check
that
\[
vav^*=
\SMatIII{x}{y}{0}{y^*}{\pi^2 z}{0}{0}{0}{a_{33}}
,\qquad wbw^*=
\SMatIII{\pi^2 x}{y}{0}{y^*}{z}{0}{0}{0}{a_{33}}\ .
\]
where
\[
x=a_{11}-a_{13}a_{33}^{-1}a_{13}^*,\qquad y=a_{12}-a_{13}a_{33}^{-1}a_{23}^*\pi,\qquad z=a_{22}-a_{23}a_{33}^{-1}a_{23}^*\ . 
\]
Therefore, we may replace $a,b,u$ with $\smallSMatII{x}{y}{y^*}{\pi^2 z}$,
$\smallSMatII{\pi^2x}{y}{y^*}{z}$, $\pi I_r\oplus \pi^{-1}I_r$.

\medskip

\Step{4} Using the notation of the previous step,
it is left to show that for any $*$-symmetric $x,z\in\nMat{S}{r}$ and $y\in\nGL{S}{r}$,
there is $u\in\nGL{S}{n}$ such that $u\smallSMatII{x}{y}{y^*}{\pi^2z}u^*=\smallSMatII{\pi^2 x}{y}{y^*}{z}$.
Replacing $a=\smallSMatII{x}{y}{y^*}{\pi^2z}$, $b=\smallSMatII{\pi^2 x}{y}{y^*}{z}$
with $vav^*$, $vbv^*$ for $v =\smallSMatII{y^{-1}}{0}{0}{I_r}$,
we may further assume that $y=I_r$.
Assuming this, let
\[
w=(2-\pi x-\pi z)^{-1}
\]
and take
\[
u=\SMatII{\pi-2\pi(1-\pi x)w}{2(1-\pi x)w}{2(1-\pi z)w}{\pi^{-1}-2\pi^{-1}(1-\pi z)w}\ .
\]
(We have $\pi^{-1}-2\pi^{-1}(1-\pi z)w\in \nMat{S}{r}$ since
$1-2(1-\pi z)w\equiv 0$ in $S/\pi S$.)
The verification of
\begin{equation}\label{EQ:eq-II}
u\SMatII{x}{1}{1}{\pi^2z}u^*=\SMatII{\pi^2 x}{1}{1}{z}
\end{equation} is tedious and left to the reader; the identity
$w(2-\pi x-\pi y)w=w$
is used repeatedly to cancel terms.
\end{proof}

We now recall the known algorithm to decompose
a matrix in $\nGL{K}{n}$ as a product $xuy$ with $x,y\in\nGL{S}{n}$
and diagonal $u\in\nGL{K}{n}$.

\begin{proposition}\label{PR}
    For any $z\in\nGL{K}{n}$, there are  $x,y\in\nGL{S}{n}$
    and diagonal $u\in\nGL{K}{n}$ with $z=xuy$.
\end{proposition}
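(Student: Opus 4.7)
The plan is to prove this by induction on $n$, adapting the Gaussian-elimination procedure to the valuation-ring setting. The only property of $S$ that is needed is that, for any two nonzero $a,b\in K$, one has either $a/b\in S$ or $b/a\in S$; equivalently, among any finite set of nonzero entries of a matrix over $K$ there is one of minimum valuation that divides all the others in $S$. The base case $n=1$ is immediate: take $x=y=1$ and $u=z$.

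For the inductive step, let $z\in\nGL{K}{n}$ and choose an entry $z_{ij}$ of minimum valuation; since $\det z\neq 0$, some entry is nonzero, so $\nu(z_{ij})<\infty$ and the entry exists. After left- and right-multiplying $z$ by permutation matrices (which lie in $\nGL{S}{n}$), we may assume $i=j=1$. For each $k>1$ the element $c_k:=z_{k1}/z_{11}$ lies in $S$ by minimality, and the elementary matrix $I-c_k e_{k1}$ (where $e_{k1}$ has a single $1$ in position $(k,1)$ and zeros elsewhere) lies in $\nGL{S}{n}$ with inverse $I+c_k e_{k1}$; left-multiplying $z$ by it clears entry $z_{k1}$ while leaving row $1$ untouched. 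Iterating over $k=2,\dots,n$ clears the whole first column below $z_{11}$. Because row $1$ was never altered, the minimality $\nu(z_{11})\le\nu(z_{1j})$ still holds for every $j>1$, so the symmetric column operations (right-multiplication by elements of $\nGL{S}{n}$) clear the first row to the right of $z_{11}$.

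The matrix now has the block-diagonal form $\mathrm{diag}(z_{11},z')$ with $z'\in\nGL{K}{n-1}$. By the inductive hypothesis, write $z'=x'u'y'$ with $x',y'\in\nGL{S}{n-1}$ and $u'$ diagonal; setting $u=\mathrm{diag}(z_{11},u')$ and absorbing the permutation and elementary matrices used above together with $\mathrm{diag}(1,x')$ and $\mathrm{diag}(1,y')$ into single factors $x,y\in\nGL{S}{n}$ yields $z=xuy$. There is no real obstacle in the argument; the one point to track carefully is that clearing the first column by row operations preserves row $1$, which is precisely what guarantees that the pivot's minimality of valuation survives long enough to also clear the first row by column operations.
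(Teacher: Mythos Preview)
Your proof is correct and follows essentially the same approach as the paper's: permute an entry of minimal valuation to position $(1,1)$, use it as a pivot to clear the first row and column via elementary matrices over $S$, and recurse on the lower-right $(n-1)\times(n-1)$ block. Your write-up is slightly more explicit in tracking that row $1$ is untouched by the column-clearing step (so that the pivot retains minimal valuation in its row), but the argument is the same.
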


\begin{proof}
    Write $z=(z_{ij})$. Multiplying $z$ on the left and on
    the right by suitable permutation matrices, we may assume $\nu(z_{11})=\min_{i,j}\nu(z_{ij})$.
    We may now replace $z$ with
    \[
    \left[\begin{smallmatrix}
    1 & & & \\
    -z_{21}z_{11}^{-1} & 1& & \\
    \vdots & & \ddots & \\
    -z_{n1}z_{11}^{-1} & & & 1
    \end{smallmatrix}\right]
    \cdot z\cdot
    \left[\begin{smallmatrix}
    1 & -z_{11}^{-1}z_{12} & \ldots & -z_{11}^{-1}z_{1n} \\
    & 1 & & \\
    & & \ddots & \\
    & & & 1
    \end{smallmatrix}\right]=
    \left[
    \begin{smallmatrix}
    z_{11} &  & & \\
    & * & \cdots & * \\
    & \vdots & & \vdots \\
    & * & \cdots & *
    \end{smallmatrix}
    \right]
    \]
    and proceed by induction on $n$.
\end{proof}

\begin{myremark}
    With some modification, the identity \eqref{EQ:eq-II} holds even without assuming $K$ is commutative
    and $\pi$ is central in $K$ (but $\pi^\sigma=\pi$ is still assumed). Indeed,
    if
    \[
    u=\SMatII{\pi-2(1-\pi x)(2-\pi x-\pi z)^{-1}\pi}{2(1-\pi x)(2-\pi x-\pi z)^{-1}}{
    2(1- z\pi)(2-x\pi-z\pi)^{-1}}{\pi^{-1}-2(1- z\pi)(2-x\pi-z\pi)^{-1}\pi^{-1}
    }\ ,
    \]
    then
    one has
    \[
    u\SMatII{x}{1}{1}{\pi z\pi}u^*=\SMatII{\pi x\pi}{1}{1}{z}\ .
    \]
    Again, the computation is left to the reader.

    This modified identity can be used to prove the following stronger version of
    Theorem~1: Let $K$ be a division ring admitting an additive valuation
    $\nu:\units{K}\onto \Gamma$ ($\Gamma$ is abelian) and an involution $\sigma$ such
    $\nu(2)=0$, $\nu\circ \sigma=\nu$, and for all $\gamma\in\Gamma$ there is $a\in K$
    with $\nu(a)=\gamma$ and $a=a^\sigma$. Let $S:=\{a\in K\suchthat \nu(a)\geq 0\}$.
    Then two unimodular $1$-hermitian forms over $(S,\sigma)$ are isometric over $K$ if and only if they are isometric over $S$.

    The proof  follows the same lines, but requires few additional technicalities, such as verifying
    that $b'=u'au'$ is invertible in $\nMat{S}{n}$ in Step~2 (the claim $\det(b')=\det(a)\in S$ is
    meaningless since $\det(\,\cdot\,)$ is no longer defined). This can be settled either by using
    \emph{Dieudonn\'{e} determinants} (together with the easy fact that
    $\units{S}/[\units{S},\units{S}]\hookrightarrow\units{K}/[\units{K},\units{K}]$;
    see \cite[p.\ 64]{Ros94AlgKThy}),
    or by carefully studying the block form of $a$ and $a^{-1}$ in $\nMat{S/\pi^2S}{n}$.
\end{myremark}

\begin{myremark}
    Reversing the proof of Theorem~\ref{TH} up to Step 2 gives  a recipe for producing
    examples of rationally isometric hermitian forms: Start with a \emph{diagonal} matrix $u\in\nGL{K}{n}$
    such that for every $\gamma\in\Gamma$, the number of diagonal entries with valuation $\gamma$
    is the same as the number of diagonal entries  with valuation $-\gamma$.
    Next, choose a matrix $a\in\nGL{S}{n}$ such that $a^*=a$ and $b:=uau^*\in\nGL{S}{n}$. The choice
    of such $a$ can be easily made rigorous.
    Finally, replace $a$, $b$, $u$ with $xax^*$, $yby^*$, $yux^{-1}$ for some $x,y\in \nGL{S}{n}$. By the proof of Theorem~\ref{TH},
    every example is obtained in this way.
\end{myremark}

\begin{myremark}
    As commented in the beginning,
    it is possible to derive an algorithm from the proof of
    Theorem~\ref{TH}. If hermitian forms and isometries are represented as $n\times n$ matrices over $K$,
    then the operations required are:
    \begin{enumerate}
        \item $O(n)$ multiplications and inversions of matrices over $K$ of
        size at most $n\times n$ (we include here also additions of matrices whose
        complexity is smaller),
        \item $O(n^3)$ applications of $\nu$ ($O(n)$ in Th.~\ref{TH} and $O(n^3)$ in Pr.~\ref{PR}),
        \item $O(n^3)$ elementary elementary operations ($+,-,\leq$) in $\Gamma$ ($O(n^3)$ comparisons in
        Pr.~\ref{PR} and at most $O(n^2)$ operations in Step~2),
        \item $O(n)$ operations of finding perimages for elements of $\Gamma$ in $R$
        (Step~2).
    \end{enumerate}
    
    Assuming a computational model in which elementary operations in $K$ and $\Gamma$ take $O(1)$ time, 
    the complexity of (a) overtakes all other operations, so this is the asymptotic complexity
    of the algorithm. A naive implementation of matrix multiplication and inversion would
    then yield complexity of $O(n^4)$. (More efficient algorithms for matrix 
    multiplication are Strassen and Coppersmith{-}Winograd, for instance.)
    There seems be considerable overlap between the matrices being multiplied
    and inverted through the algorithm, so perhaps further improvement can be achieved by
    cleverly exploiting this.
    
    In practice, since $K$ is infinite, the complexity of elementary operations in elements of $K$ depends
    on the number of bits required to represent the elements, and usually these
    operations yield elements of longer presentation (consider $\Q$ or $\F_p(t_1,\dots,t_n)$ for example).
    Estimating the exact affect of this on the complexity of the algorithm is complicated, and may heavily
    depend on the base field and its presentation.
    However, experiments suggests that the practical worst-case complexity is  exponential
    when $K=\Q$; the output isometry has very large nominators and denominators, roughly increasing
    exponentially in $n$.
    
    A {\tt python} implementation of the algorithm (which also includes code
    for producing input for the algorithm) can be found on the author's homepage.
    It works for  valuated involutary division rings as well (implementation
    of such example is included). Depending on the computer,
    the worst-case running time for $10\times 10$ matrices over $\Q$ is about $90$ seconds. 
    Of course, a large factor of this can be saved by moving to a lower-level
    programming language.
\end{myremark}

\subsection*{Acknowledgements}

I deeply thank Eva Bayer-Fl\"{u}ckiger for introducing me with the Grothendieck-Serre conjecture, and for
hosting me in EPFL during the research.

\bibliographystyle{plain}
\bibliography{MyBib}

\end{document}